\newcolumntype{^}{>{\currentrowstyle}}
\journal{Discrete Mathematics}
\newtheorem{theorem}{Theorem}
\newtheorem{construction}{Construction}
\begin{document}
\renewcommand{\abstractname}{Abstract}
\renewcommand{\refname}{References}
\renewcommand{\tablename}{Table.}
\renewcommand{\arraystretch}{0.9}
\sloppy

\begin{frontmatter}
\title{Construction of divisible design graphs using affine designs}

\author{Vladislav~V.~Kabanov}
\ead{vvk@imm.uran.ru}
\address{School of Mathematical Sciences, Hebei Key Laboratory of Computational Mathematics and Applications, Hebei Normal University, Shijiazhuang 050024, PR China}
\address{Krasovskii Institute of Mathematics and Mechanics, Yekaterinburg, 620077, Russia}

\begin{abstract}
A $k$-regular graph on $v$ vertices is a {\em divisible design graph} if there exist integers $\lambda_1,\lambda_2,m,n$ such that the vertex set can be partitioned into $m$ classes of size $n$ and any two different vertices from the same class have $\lambda_1$ common neighbours, and any two vertices from different classes have  $\lambda_2$ common neighbours. This paper presents two prolific constructions that produce infinite series of divisible design graphs. The first construction develops ideas of W.D.~Wallis, D.G.~Fon-Der-Flaass, and M.~Muzychuk which were created to construct new strongly regular graphs.
\end{abstract}

\begin{keyword}  Divisible design graph

\vspace{\baselineskip}
\MSC[2020] 05C51
\end{keyword}
\end{frontmatter}

\section{Introduction}

A $k$-regular graph on $v$ vertices is a {\em divisible design graph} if there exist integers $\lambda_1,\lambda_2,m,n$ such that the vertex set can be partitioned into $m$ classes of size $n$ and any two different vertices from the same class have $\lambda_1$ common neighbours, and any two vertices from different classes have  $\lambda_2$ common neighbours.
The partition of a divisible design graph with parameters $(v,k,\lambda_1,\lambda_2,m,n)$ into  classes is called  a {\em canonical partition}.
If $m=1$, $n=1$, or $\lambda_1=\lambda_2=\lambda$, then a divisible design graph is a strongly regular with parameters $(v,k,\lambda,\lambda)$. If this is not the case, then a divisible design graph is called {\em proper}. The canonical partition of a proper divisible design graph yields a partition of its adjacency matrix. Thus, the adjacency matrix of any proper divisible design graph consists of parts called blocks according to the canonical partition.

Note that a divisible design graph is a divisible design if the vertices are considered as points, and the neighbourhoods of the vertices as blocks. R.C.~Bose and W.S.~Connor studied the combinatorial properties of divisible designs in~\cite{BC}. 

Divisible design graphs were first provided by W.H.~Haemers, H.~Kharaghani and M.~Meulenberg in~\cite{HKM}.  In particular, the authors have proposed nineteen constructions of divisible design graphs using various combinatorial structures
and two sporadic examples. Some new combinatorial constructions of divisible design graphs were also provided in~\cite{CH},\cite{VK}, \cite{PSh},\cite{Sh} and constructions as Cayley graphs in \cite{CSv}, \cite{KSh}. 

W.D.~Wallis proposed in~\cite{W} a new construction of strongly regular graphs using affine designs and a Steiner 2-design. Later D.G.~Fon-Der-Flaass found how to modify a partial case of Wallis construction, when the corresponding Steiner $2$-design has blocks of size $2$, in order to obtain hyperexponentially many strongly regular graphs with the same parameters \cite{FF}.
M.~Muzychuk in~\cite{MM} showed how to modify Fon-Der-Flaass ideas in order to cover all the cases of Wallis construction. Moreover, he showed that a Steiner $2$-design in the original Wallis construction can be replaced by a partial linear space and  discovered new prolific constructions of strongly regular graphs. 

This paper presents two prolific constructions that produce infinite series of divisible design graphs (Section \ref{FirstCon} and \ref{ParCom}). The first construction develops ideas of W.D.~Wallis, D.G.~Fon-Der-Flaass, and M.~Muzychuk. According to references, these constructions are new. In some cases, the parameters of the series coincide with those of the divisible design graphs found in \cite{BG} (see Section \ref{BhGor}). The smallest new example is provided in Section \ref{SmEx}. 

{\em  An affine design} $\mathcal{D}=(\mathcal{P}, \mathcal{B})$  with parameters $q$ and $r$ is a design, where $\mathcal{P}$ is a set of points and $\mathcal{B}$ is a set of blocks,  with the  following two properties: every two blocks are either disjoint or intersect in $r$  points; each block together with all blocks disjoint from it forms a parallel class: a set of $q$ mutually disjoint blocks partitioning all points of the design.

Any $d$-dimensional affine space over a finite field of order $q$  is point-hyperplane design with $r=q^{d-2}$. This design has $q^2 r = q^d$ points, any block contains $q r = q^{d-1}$ points, the number of  parallel classes is $(q^2 r-1)/(q-1) = (q^d-1)/(q-1)$, and the number of blocks containing any two distinct points is 
$(q r -1)/(q-1) = (q^{d-1}-1)/(q-1)$. Other known examples of affine designs are finite Hadamard $3$-designs $(q=2)$, Desarguesian and non-Desarguesian finite affine planes.

We only consider undirected graphs without loops or multiple edges. For all necessary information about graphs we refer to \cite{BH} and about designs to \cite{CDW}.

\section{Sporadic example}\label{se}

The following block matrix is the adjacency matrix of a divisible design graph with parameters 
$(28,6,2,1,7,4)$ which was found by D.I.~Panasenko and L.V.~Shalaginov in \cite[Construction 23]{PSh} using computer calculations. 

$$M = \begin{bmatrix}
O    & a & b & c & O &   O &   O\\
a^T  & d & O & O & e &   O &   O\\
b^T  & O & d & O & O &   e &   O\\
c^T  & O & O & d & O &   O &   e\\
O    & e^T & O & O & O & a &   c\\
O    & O & e^T & O & a^T & O &   f\\
O    & O & O & e^T & c^T & f^T & O
\end{bmatrix},$$
where $O$ is the zero $(4\times 4)$-matrix, and
$$a = \begin{bmatrix}
1 & 1 & 0 & 0\\
1 & 1 & 0 & 0\\
0 & 0 & 1 & 1\\
0 & 0 & 1 & 1
\end{bmatrix},\quad
b = \begin{bmatrix}
1 & 1 & 0 & 0\\
0 & 0 & 1 & 1\\
1 & 1 & 0 & 0\\
0 & 0 & 1 & 1
\end{bmatrix},\quad
c = \begin{bmatrix}
1 & 1 & 0 & 0\\
0 & 0 & 1 & 1\\
0 & 0 & 1 & 1\\
1 & 1 & 0 & 0
\end{bmatrix},$$
$$d = \begin{bmatrix}
0 & 1 & 1 & 0\\
1 & 0 & 0 & 1\\
1 & 0 & 0 & 1\\
0 & 1 & 1 & 0
\end{bmatrix},\quad
e = \begin{bmatrix}
1 & 0 & 1 & 0\\
0 & 1 & 0 & 1\\
1 & 0 & 1 & 0\\
0 & 1 & 0 & 1
\end{bmatrix},\quad
f = \begin{bmatrix}
1 & 0 & 0 & 1\\
0 & 1 & 1 & 0\\
0 & 1 & 1 & 0\\
1 & 0 & 0 & 1
\end{bmatrix}.$$

It is easy to see that if we replace in $M$ any zero block with $0$ and any nonzero block with $1$, we obtain the incidence matrix of the Fano plane. 

$$A=\begin{bmatrix}
0 & 1 & 1 & 1 & 0 & 0 & 0\\
1 & 1 & 0 & 0 & 1 & 0 & 0\\
1 & 0 & 1 & 0 & 0 & 1 & 0\\
1 & 0 & 0 & 1 & 0 & 0 & 1\\
0 & 1 & 0 & 0 & 0 & 1 & 1\\
0 & 0 & 1 & 0 & 1 & 0 & 1\\
0 & 0 & 0 & 1 & 1 & 1 & 0
\end{bmatrix}.$$

The Fano plane can be considered as a symmetric $2$-$(7,3,1)$-design. This observation led to the construction of divisible design graphs described in the next section.

\section{First construction}\label{FirstCon}

The construction presented in this section is a generalization of Construction 1 and Construction 3 from \cite{VK}.
\smallskip

Let $\mathcal{D}_1, \dots ,\mathcal{D}_m$ be affine designs with parameters 
$(q,r)$ which are not necessarily isomorphic, and $\kappa=(q^2 r - 1)/(q-1)$ be the number of parallel classes of blocks for each $\mathcal{D}_i$.
 Let $\mathcal{D}_i=(\mathcal{P}_i, \mathcal{B}_i)$ and the parallel classes in each $\mathcal{D}_i$ are enumerated by integers from $[\kappa]$.  
We denote the $j$-th parallel class of $\mathcal{D}_i$ by $\mathcal{B}_i^j$
and the block in the parallel class $\mathcal{B}_i^j$ that contains $x$ by $B_i^j(x)$.
\smallskip

Let $A$ be a symmetric $m\times m$ incidence matrix of a symmetric $2$-design with parameters $(m,\kappa,\lambda)$.
Let $L=(e(i,j))$ be an $(m\times m)$-matrix which is obtained from
$A$ by changing $\kappa$ nonzero entries in each rows of $A$ by all integers
from $[\kappa]=\{1,2,\dots,\kappa\}$. It means that the map from $[\kappa]$ to the set of nonzero entries for each row of $L$ is a bijection.
\smallskip

For each pair $i, j$ for which $e(i,j)\neq 0$, select an arbitrary bijection
$$\sigma_{i,j} : \mathcal{B}_i^{e(i,j)} \rightarrow \mathcal{B}_j^{e(j,i)}.$$ 
This definition is correct because $A$ is a symmetric $(m\times m)$-matrix so $e(i,j)=0$
if and only if $e(j,i)=0$.

We require that $\sigma_{i,j}=\sigma_{j,i}^{-1}$ for $i\neq j$ and $\sigma_{i,i}$ is the identical map on
$\mathcal{B}_i^{e(i,i)}$.

\begin{construction}\label{Con1}
Let $\Gamma$ be a graph defined as follows:
\begin{itemize}
    \item The vertex set of $\Gamma$ is  $\displaystyle V(\Gamma)=\bigcup_{i=1}^{m} \mathcal{P}_i.$ 
    \item Two different vertices $x\in \mathcal{P}_i$ and $y\in \mathcal{P}_j$ are adjacent in $\Gamma$ if and only if $e(i,j)\neq 0$, and 
    $$y \notin \sigma_{i,j}(B_i^{e(i,j)}(x))\quad \mathit{for} \quad  i,j\in [m].$$ 
\end{itemize}
\end{construction}

\begin{theorem}\label{Th1}  
If $\Gamma$ is a graph from Construction~\ref{Con1}, then  $\Gamma$ is a divisible design graph with parameters  
 $$v = q^2 r m, \qquad k = q r (q^2 r - 1),$$
   $$\lambda_1 = q r (q^2 r - q r - 1),
   \qquad \lambda_2 = \lambda r (q-1)^2 ,$$
 $$ m,  \qquad n=q^2 r.$$

\end{theorem}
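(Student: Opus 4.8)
The plan is to verify the three defining conditions of a divisible design graph directly, taking the canonical partition to be the point sets $\mathcal{P}_1,\dots,\mathcal{P}_m$; each has $q^d$ points, so immediately $v=q^dm$, the number of classes is $m$, and $n=q^d$. I would first record the facts about an affine design $\mathcal{D}_i$ that do all the work: every block has $q^{d-1}$ points; each of the $\kappa$ parallel classes partitions the $q^d$ points into $q$ disjoint blocks; any two distinct points lie together in exactly $(q^{d-1}-1)/(q-1)$ blocks; and two blocks from \emph{different} parallel classes always meet in exactly $r=q^{d-2}$ points, since by the definition two disjoint blocks must be parallel. I would also note that the relation is symmetric, so $\Gamma$ is well defined: using $\sigma_{ij}=\sigma_{ji}^{-1}$, the condition $y\in\sigma_{ij}(B_i^{e(i,j)}(x))$ is equivalent to $\sigma_{ij}(B_i^{e(i,j)}(x))=B_j^{e(j,i)}(y)$, which is symmetric in $x$ and $y$. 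For the degree, fix $x\in\mathcal{P}_i$: a neighbour in $\mathcal{P}_j$ can exist only for the $\kappa$ indices $j$ with $e(i,j)\neq0$, and for each such $j$ the forbidden set $\sigma_{ij}(B_i^{e(i,j)}(x))$ is a single block of size $q^{d-1}$, leaving $q^d-q^{d-1}$ neighbours. Hence $k=\kappa\,q^{d-1}(q-1)=q^{d-1}(q^d-1)$.

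For $\lambda_1$, take distinct $x,x'\in\mathcal{P}_i$. A common neighbour in $\mathcal{P}_j$ must avoid both $\sigma_{ij}(B_i^{e(i,j)}(x))$ and $\sigma_{ij}(B_i^{e(i,j)}(x'))$, which lie in the \emph{same} parallel class $\mathcal{B}_j^{e(j,i)}$, the image under $\sigma_{ij}$ of $\mathcal{B}_i^{e(i,j)}$. The bookkeeping point is that as $j$ ranges over the $\kappa$ indices with $e(i,j)\neq0$, the values $e(i,j)$ range over all of $[\kappa]$, so the classes $\mathcal{B}_i^{e(i,j)}$ range over \emph{all} parallel classes of $\mathcal{D}_i$. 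Since the blocks of one parallel class are disjoint, $x$ and $x'$ share a block in exactly $(q^{d-1}-1)/(q-1)$ of these classes; each such $j$ contributes $q^d-q^{d-1}$ common neighbours. In the remaining $q^{d-1}$ classes the two blocks are disjoint, contributing $q^d-2q^{d-1}$ each. Adding the two contributions gives $\lambda_1=q^{d-1}(q^d-q^{d-1}-1)$.

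For $\lambda_2$, take $x\in\mathcal{P}_i$ and $x'\in\mathcal{P}_{i'}$ with $i\neq i'$. A common neighbour in $\mathcal{P}_j$ exists only when $e(i,j)\neq0$ and $e(i',j)\neq0$, and the number of such $j$ is the off-diagonal entry $(AA^{T})_{i,i'}=\lambda$ coming from $AA^{T}=(\kappa-\lambda)I+\lambda J$. Here the structural difference from the $\lambda_1$ case is decisive: the forbidden blocks $\sigma_{ij}(B_i^{e(i,j)}(x))$ and $\sigma_{i'j}(B_{i'}^{e(i',j)}(x'))$ lie in the parallel classes $\mathcal{B}_j^{e(j,i)}$ and $\mathcal{B}_j^{e(j,i')}$, and since the nonzero entries of row $j$ of $L$ form a bijection onto $[\kappa]$ and $i\neq i'$, we have $e(j,i)\neq e(j,i')$. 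Thus the two blocks come from different parallel classes of $\mathcal{D}_j$ and meet in exactly $q^{d-2}$ points, so their union has $2q^{d-1}-q^{d-2}$ points and leaves $q^d-2q^{d-1}+q^{d-2}=q^{d-2}(q-1)^2$ common neighbours in $\mathcal{P}_j$. Summing over the $\lambda$ admissible indices gives $\lambda_2=q^{d-2}(q-1)^2\lambda$, completing the verification.

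I expect the main obstacle to be organising the two case analyses so that the bijection condition on the rows of $L$ is exploited correctly in both at once: it is precisely this condition that forces the two relevant parallel classes to \emph{coincide} in the intra-class computation ($\lambda_1$) but to be \emph{distinct} in the inter-class computation ($\lambda_2$). This dichotomy, one block versus two blocks meeting in $r=q^{d-2}$ points, is exactly what makes both $\lambda_1$ and $\lambda_2$ come out constant and independent of the chosen affine designs and bijections. Some care is also needed to confirm that the count of admissible indices $j$ in the $\lambda_2$ case is genuinely the design parameter $\lambda$, which is where the symmetric $2$-design identity $AA^{T}=(\kappa-\lambda)I+\lambda J$ enters and where the symmetry of $A$ is used.
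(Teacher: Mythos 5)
Your proof is correct and follows essentially the same route as the paper's: the degree count via the $\kappa$ forbidden blocks of size $q^{d-1}$, the $\lambda_1$ count via the same-block/disjoint-blocks dichotomy within one parallel class, and the $\lambda_2$ count via the bijection condition on rows of $L$ forcing the two forbidden blocks into distinct parallel classes meeting in $q^{d-2}$ points. Your additions (the explicit check that adjacency is symmetric via $\sigma_{ij}=\sigma_{ji}^{-1}$, and the identity $AA^{T}=(\kappa-\lambda)I+\lambda J$ justifying the count of $\lambda$ admissible indices) are sound refinements of details the paper leaves implicit.
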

\begin{proof} The proof is very similar to the proof of \cite[Theorem 1]{VK}, but at the same time there are important differences, since the more complicated structure of $L$ must be taken into account. 

If $\Gamma$ is a graph from Construction~\ref{Con1}, then the number of vertices of $\Gamma$ is equal to $q^2 r m$.

Let $x$ be a vertex of $\Gamma$ belonging to $\mathcal{P}_i$. There are exactly $\kappa$ nonzero entries in each row of $L$, thus
 $$\Gamma(x)= \bigcup_{j=1}^{\kappa} (\mathcal{P}_j\setminus \sigma_{i,j}(B_i^{e(i,j)}(x))).$$
Clearly, $$|\mathcal{P}_j\setminus \sigma_{i,j}(B_i^{e(i,j)}(x)|=
|\mathcal{P}_j|-|\sigma_{i,j}(B_i^{e(i,j)}(x)|= q^2 r - q r.$$
Hence, $\Gamma$ is a regular graph of degree 
$k = \kappa q r (q - 1) = q r (q^2 r - 1).$

Let $x$ and $y$ be two different vertices in $\Gamma$ and both belonging to $\mathcal{P}_i$. There are exactly $(q r -1)/(q-1)$ blocks in $\mathcal{D}_i$ that contain both $x$ and $y$.
Thus, there are $(q r -1)/(q-1)$ cases for $\mathcal{P}_j$ in which $\sigma_{i,j}(B_i^{e(i,j)}(x))$ and $\sigma_{i,j}(B_i^{e(i,j)}(y))$ are the same blocks. Hence, $x$ and $y$ have exactly $q^2 r - q r$ common neighbours in each of these cases.
In the remaining possible cases $\sigma_{i,j}(B_i^{e(i,j)}(x))$ and $\sigma_{i,j}(B_i^{e(i,j)}(y))$  are disjoint blocks. Hence, $x$ and $y$ have exactly 
$q^2 r - 2q r$ common neighbours in each of the remaining possible cases. 
Therefore, the number of common neighbours for $x$ and $y$ equals 
$$(q^2 r - q r) \frac{(q r -1)}{(q-1)} + (q^2 r - 2q r)\left(\frac{(q^2 r-1)}{(q-1)} - \frac{(q r-1)}{(q-1)}\right) = $$ $$ = q r (q^2 r - q r - 1).$$

Let $x$ be in $\mathcal{P}_i$, and $y$ be in $\mathcal{P}_j$, where $i\neq j$. 
There are exactly $\lambda$ cases when $e(i,h)$ and $e(j,h)$ both are nonzero for $h\in [m]$. Additionally, $e(h,i)$ and $e(h,j)$ are different by the definition of $L$. For each of these cases, there are two blocks
$$\sigma_{ih}(B_i^{e(i,h)}(x))\quad \mathrm{and}\quad \sigma_{jh}(B_j^{e(j,h)}(y))$$ in $\mathcal{D}_h$ that are not parallel and therefore have exactly $r$ common points.
So $x$ and $y$ have exactly $q^2 r - 2q r + r$ common neighbours in each of these $\lambda$ cases. Hence, the number of common neighbours for $x$ and $y$ equals 
$$\lambda (q^2 r - 2q r + r) = \lambda r (q-1)^2.$$ 
\end{proof}\hfill $\square$
\medskip

Note that the equality $\kappa(\kappa -1)=(m-1)\lambda$ holds for any symmetric $2$-design with parameters $(m,\kappa,\lambda)$, including the two trivial cases $m=\kappa$ and $m=\kappa+1$.
Therefore, if $m=\kappa$, then $\lambda =\kappa$. Thus, we have the situation from \cite[Theorem 1]{VK} and $\Gamma$ has parameters   
$$v = q^2 r (q^2 r - 1)/(q-1),\quad k = q r (q^2 r - 1),$$
$$\lambda_1 = q  r (q^2 r - q r - 1),\quad \lambda_2 = r (q-1)(q^2 r - 1),$$ $$m = (q^2 r - 1)/(q-1),\quad n = q^2 r.$$ If $m=\kappa+1$, then $\lambda =\kappa -1$. Thus, we have the situation from \cite[Theorem 3]{VK} and $\Gamma$ has parameters    
$$v = q^2 r(q^2 r +q-2)/(q-1),\quad k = q r (q^2 r - 1),$$
$$\lambda_1 = q r (q^2 r - q r - 1),\quad \lambda_2 = q r (q - 1)(q r - 1),$$ 
$$m = (q^2 r +q-2)/(q-1),\quad n = q^2 r.$$

\section{Partial complement}\label{ParCom}

In general, the complement of a divisible design graph is not a divisible design graph again. In \cite{HKM}, the authors considered the complement of the off-diagonal blocks of the adjacency block matrix of a divisible design graph. There are some cases when the result of such a partial complement is a new divisible design graph (see Proposition 4.15 and Proposition 4.16 \cite{HKM}). We consider another variant of partial complement of divisible design graphs.

\begin{construction}\label{Con2}
Let $\Gamma$ be a graph from Construction \ref{Con1} with parameters 
 $$v = q^2 r m, \qquad k = q r (q^2 r - 1),$$
   $$\lambda_1 = q r (q^2 r - q r - 1), \qquad \lambda_2 = \lambda r (q-1)^2 ,$$
 $$ m,  \qquad n=q^2 r,$$
and $M$ be the canonical block matrix of $\Gamma$.
Assume that every diagonal block of $M$ is nonzero.
Replace all zero off-diagonal blocks with all-ones matrices of the same size.
Denote the graph with this new matrix by $\Gamma^\ast$.
\end{construction}

\begin{theorem}\label{Th2}  
$\Gamma^\ast$ is a divisible design graph with parameters  
$$v^\ast = q^2 r m,\quad k^\ast = q r (q^2 r - 1) + q^2 r (m-\kappa),$$
$$\lambda_1^\ast = q r (q^2 r - q r - 1) + q^2 r (m-\kappa),$$
$$\lambda_2^\ast = r (q-1)^2\lambda + 
2(q^2 r - q r)(\kappa-\lambda) + q^2 r (m-2\kappa+\lambda),$$ $$m^\ast =m,\quad n^\ast = q^2 r.$$
\end{theorem} 

\begin{proof}
It is obvious that $\Gamma$ and $\Gamma^\ast$ have the same number of vertices. Since all diagonal blocks of $M$ are nonzero the  number of non-diagonal zero blocks in each  row of the adjacency block matrix of $\Gamma$ equals $m-\kappa$. This gives $q^2 r (m-\kappa)$ as an increase for $k^\ast$ and $\lambda_1^\ast$. The parameter $\lambda_2^\ast$ increases by $2(q^2 r -2q r)(\kappa-\lambda)$ where zero blocks meet non-zero blocks, and by $q^2 r (m-2\kappa+\lambda)$ where non-zero blocks meet.
Clearly, $\Gamma$ and $\Gamma^\ast$ have the same number and size of their canonical classes.
\end{proof} \hfill $\square$

\section{Divisible design graphs from symplectic graphs over rings}\label{BhGor}

In \cite{BG}, A. Bhowmik and S. Goryainov provided a new construction of divisible design graphs from symplectic graphs over rings. Let $q$ be a prime power, $\mathbb{F}_q$ be a finite field of order $q$, $K$ be a finite commutative ring with identity having exactly one non-trivial ideal $J$ such that $K/J\cong F_q$, and $K^{\times}$ be the set of units of $K$. For any integer $e \geq 2$,
let $V'$ be defined as follows: $$\{(a_1,a_2,\dots,a_{2e})\, :\, a_1,a_2,\dots,a_{2e}\in K,\,\, a_j\in K^{\times}\, \mathrm{for\, some}\, j\in [2e]\}.$$ 
For $a=(a_1,a_2,\dots,a_{2e})$ and $b=(b_1,b_2,\dots, b_{2e})$, let the equivalence relation $\sim$ on $V'$ be defined as: 
 $a\sim b$ iff there exists $\lambda\in K^{\times}$ such that $a_j =\lambda b_j$ for all $j\in 
 [2e].$
Let $V$ denote the set of equivalence classes on $V'$ corresponding to this equivalence relation, and $[a_1,a_2,\dots,a_{2e}]$ be the equivalence class of $(a_1,a_2,\dots,a_{2e})$. Let 
$$M =\left( \begin{array}{cc} O_e & I_e \\ -I_e & O_e \end{array}\right),$$ where $I_e$ is the identity $(e\times e)$-matrix and $O_e$ is the zero $(e\times e)$-matrix.

The authors in \cite{BG} define two graphs $X(2e,K)$ and $Y(2e,K)$, both with vertex set 
$V$ :
\begin{itemize}
    \item $[a_1,a_2,\dots,a_{2e}]$ and $[b_1,b_2,\dots, b_{2e}]$ is adjacent in $X(2e,K)$ iff
$$(a_1,a_2,\dots,a_{2e})M(b_1,b_2,\dots, b_{2e})^T\in K\setminus \{0\};$$
    \item $[a_1,a_2,\dots,a_{2e}]$ and $[b_1,b_2,\dots, b_{2e}]$ is adjacent in $Y(2e,K)$ iff
$$(a_1,a_2,\dots,a_{2e})M(b_1,b_2,\dots, b_{2e})^T\in J\setminus \{0\}.$$
\end{itemize}

By \cite[Theorem 1]{BG} $X(2e,K)$ is a divisible design graph with parameters 
$$v=q^{2e-1}(q^{2e} - 1)/(q-1),\quad k=q^{4e-2}+q^{4e-3}-q^{2e-2},$$
$$\lambda_1=q^{4e-2}+q^{4e-3}-q^{4e-4}-q^{2e-2},\quad 
\lambda_2=q^{4e-2}+q^{4e-3}-q^{4e-4}-q^{4e-5}-q^{2e-2}+q^{2e-3},$$
$$m=(q^{2e} - 1)/(q-1),\quad n=q^{2e-1}.$$
By \cite[Theorem 2]{BG} $Y(2e,K)$ is a divisible design graph with parameters 
$$v=q^{2e-1}(q^{2e}-1)/(q-1),\quad k=q^{4e-3}-q^{2e-2},\quad
\lambda_1=q^{4e-3}-q^{4e-4}-q^{2e-2},$$
$$\lambda_2=q^{4e-4}-q^{4e-5}-q^{2e-2}+q^{2e-3},\quad
m=(q^{2e} - 1)/(q-1),\quad n=q^{2e-1}.$$

By the definition $Y(2e,K)$ is a spanning subgraph of $X(2e,K)$. Therefore, it is not difficult to see that $X(2e,K)$ is a partial complement of $Y(2e,K)$. Moreover, if $d=2e-1$ and $A$ from Construction \ref{Con1} is the incident matrix of a symmetric $2$-design with parameters 
$((q^{d+1}-1)/(q-1),(q^d-1)/(q-1),(q^{d-1}-1)/(q-1))$, then the parameters of $Y(2e,K)$ coincide with the parameters of divisible design graphs from Theorem \ref{Th1}. To verify isomorphism $Y(2e,K)$ to some 
of divisible design graphs from Construction \ref{Con1} it is enough to find appropriate bijections
$\sigma_{i,j}$. We checked the isomorphism for the graphs $Y(4,K_1)$ and $Y(4,K_2)$, where $K_1=\mathbb{Z}/p^2\mathbb{Z}$ and $K_2=\mathbb{F}_p[x]/\langle x^2\rangle$ for $p\in \{2,3\}$ using the GRAPE package \cite{GRAPE} for the GAP system \cite{GAP}.

\section{Smallest new divisible design graph from Construction 1}\label{SmEx}

 D.I.~Panasenko and L.V.~Shalaginov in \cite{PSh} found all proper divisible design graphs with the number of vertices no more than $39$, with the exception of three tuples of parameters: $(32, 15, 6, 7, 4, 8)$, $(32, 17, 8, 9, 4, 8)$ and $(36, 24, 15, 16, 4, 9)$, using computer calculations. All divisible design graphs with parameters $(36, 24, 15, 16, 4, 9)$ were found in \cite{GK}. 
 The smallest example of a divisible design graph from Construction 1 has parameters $(12,6,2,3,3,4)$. 
This divisible design graph was known from Construction 4.20 [HKM] as the line graph of the octahedron.
The second example is found in \cite[Construction 23]{PSh} and described in section \ref{se}. There are no other examples from Construction 1  and there are no examples from Construction 2 in the list from \cite{PSh}. 

 The Fano plane is the unique nontrivial symmetric $2$-design with $\kappa =3$. The complement of $2$-$(7,3,1)$-design is $2$-$(7,4,2)$-design. 
 We can use this symmetric 2-design and the $2$-dimensional point-hyperplane affine designs over the finite field of order $3$ (which has exactly $4$ parallel classes of blocks) to construct
a divisible design graph with  parameters $(63,24,15,8,7,9)$. This example cannot be obtained from the constructions in \cite{BG}.

Choose $L$ as follows:
$$\begin{bmatrix}
1 & 0 & 0 & 0 & 2 & 3 & 4\\
0 & 0 & 1 & 2 & 0 & 3 & 4\\
0 & 1 & 0 & 2 & 3 & 0 & 4\\
0 & 1 & 2 & 0 & 3 & 4 & 0\\
1 & 0 & 2 & 3 & 4 & 0 & 0\\
1 & 2 & 0 & 3 & 0 & 4 & 0\\
1 & 2 & 3 & 0 & 0 & 0 & 4
\end{bmatrix}.$$ 
Then $$M=\begin{bmatrix}
m_{11} & O & O & O & m_{12} & m_{13} & m_{14} \\
O & O & m_{11} & m_{12} & O & m_{23} & m_{24} \\
O & m_{11} & O & m_{22} & m_{23} & O & m_{34} \\
O & m_{21} & m_{22} & O & m_{33} & m_{34} & O \\
m_{21} & O & m_{32} & m_{33} & m_{44} & O & O \\
m_{31} & m_{32} & O & m_{43} & O & m_{44} & O \\
m_{41} & m_{42} & m_{43} & O & O & O & m_{44}
\end{bmatrix}$$ 
is the adjacency matrix of a divisible design graph with parameters $(63,24,15,8,7,9)$, where $O$ is the zero $(9\times 9)$-matrix, and
\begin{center}
{\footnotesize
$m_{11}=\begin{bmatrix}
0 & 0 & 0 & 1 & 1 & 1 & 1 & 1 & 1 \\
0 & 0 & 0 & 1 & 1 & 1 & 1 & 1 & 1 \\
0 & 0 & 0 & 1 & 1 & 1 & 1 & 1 & 1 \\
1 & 1 & 1 & 0 & 0 & 0 & 1 & 1 & 1 \\
1 & 1 & 1 & 0 & 0 & 0 & 1 & 1 & 1 \\
1 & 1 & 1 & 0 & 0 & 0 & 1 & 1 & 1 \\
1 & 1 & 1 & 1 & 1 & 1 & 0 & 0 & 0 \\
1 & 1 & 1 & 1 & 1 & 1 & 0 & 0 & 0 \\
1 & 1 & 1 & 1 & 1 & 1 & 0 & 0 & 0
\end{bmatrix},$
 $m_{12}=\begin{bmatrix}
0 & 0 & 0 & 1 & 1 & 1 & 1 & 1 & 1 \\
1 & 1 & 1 & 0 & 0 & 0 & 1 & 1 & 1 \\
1 & 1 & 1 & 1 & 1 & 1 & 0 & 0 & 0 \\
0 & 0 & 0 & 1 & 1 & 1 & 1 & 1 & 1 \\
1 & 1 & 1 & 0 & 0 & 0 & 1 & 1 & 1 \\
1 & 1 & 1 & 1 & 1 & 1 & 0 & 0 & 0 \\
0 & 0 & 0 & 1 & 1 & 1 & 1 & 1 & 1 \\
1 & 1 & 1 & 0 & 0 & 0 & 1 & 1 & 1 \\
1 & 1 & 1 & 1 & 1 & 1 & 0 & 0 & 0
\end{bmatrix},$
 $m_{13}=\begin{bmatrix}
0 & 0 & 0 & 1 & 1 & 1 & 1 & 1 & 1 \\
1 & 1 & 1 & 0 & 0 & 0 & 1 & 1 & 1 \\
1 & 1 & 1 & 1 & 1 & 1 & 0 & 0 & 0 \\
1 & 1 & 1 & 0 & 0 & 0 & 1 & 1 & 1 \\
1 & 1 & 1 & 1 & 1 & 1 & 0 & 0 & 0 \\
0 & 0 & 0 & 1 & 1 & 1 & 1 & 1 & 1 \\
1 & 1 & 1 & 1 & 1 & 1 & 0 & 0 & 0 \\
0 & 0 & 0 & 1 & 1 & 1 & 1 & 1 & 1 \\
1 & 1 & 1 & 0 & 0 & 0 & 1 & 1 & 1
\end{bmatrix},$
$m_{14}=\begin{bmatrix}
0 & 0 & 0 & 1 & 1 & 1 & 1 & 1 & 1 \\
1 & 1 & 1 & 0 & 0 & 0 & 1 & 1 & 1 \\
1 & 1 & 1 & 1 & 1 & 1 & 0 & 0 & 0 \\
1 & 1 & 1 & 1 & 1 & 1 & 0 & 0 & 0 \\
0 & 0 & 0 & 1 & 1 & 1 & 1 & 1 & 1 \\
1 & 1 & 1 & 0 & 0 & 0 & 1 & 1 & 1 \\
1 & 1 & 1 & 0 & 0 & 0 & 1 & 1 & 1 \\
1 & 1 & 1 & 1 & 1 & 1 & 0 & 0 & 0 \\
0 & 0 & 0 & 1 & 1 & 1 & 1 & 1 & 1
\end{bmatrix},$
 $m_{22}=\begin{bmatrix}
0 & 1 & 1 & 0 & 1 & 1 & 0 & 1 & 1 \\
1 & 0 & 1 & 1 & 0 & 1 & 1 & 0 & 1 \\
1 & 1 & 0 & 1 & 1 & 0 & 1 & 1 & 0 \\
0 & 1 & 1 & 0 & 1 & 1 & 0 & 1 & 1 \\
1 & 0 & 1 & 1 & 0 & 1 & 1 & 0 & 1 \\
1 & 1 & 0 & 1 & 1 & 0 & 1 & 1 & 0 \\
0 & 1 & 1 & 0 & 1 & 1 & 0 & 1 & 1 \\
1 & 0 & 1 & 1 & 0 & 1 & 1 & 0 & 1 \\
1 & 1 & 0 & 1 & 1 & 0 & 1 & 1 & 0
\end{bmatrix},$
 $m_{23}=\begin{bmatrix}
0 & 1 & 1 & 0 & 1 & 1 & 0 & 1 & 1 \\
1 & 0 & 1 & 1 & 0 & 1 & 1 & 0 & 1 \\
1 & 1 & 0 & 1 & 1 & 0 & 1 & 1 & 0 \\
1 & 0 & 1 & 1 & 0 & 1 & 1 & 0 & 1 \\
1 & 1 & 0 & 1 & 1 & 0 & 1 & 1 & 0 \\
0 & 1 & 1 & 0 & 1 & 1 & 0 & 1 & 1 \\
1 & 1 & 0 & 1 & 1 & 0 & 1 & 1 & 0 \\
0 & 1 & 1 & 0 & 1 & 1 & 0 & 1 & 1 \\
1 & 0 & 1 & 1 & 0 & 1 & 1 & 0 & 1
\end{bmatrix},$
$m_{24}=\begin{bmatrix}
0 & 1 & 1 & 0 & 1 & 1 & 0 & 1 & 1 \\
1 & 0 & 1 & 1 & 0 & 1 & 1 & 0 & 1 \\
1 & 1 & 0 & 1 & 1 & 0 & 1 & 1 & 0 \\
1 & 1 & 0 & 1 & 1 & 0 & 1 & 1 & 0 \\
0 & 1 & 1 & 0 & 1 & 1 & 0 & 1 & 1 \\
1 & 0 & 1 & 1 & 0 & 1 & 1 & 0 & 1 \\
1 & 0 & 1 & 1 & 0 & 1 & 1 & 0 & 1 \\
1 & 1 & 0 & 1 & 1 & 0 & 1 & 1 & 0 \\
0 & 1 & 1 & 0 & 1 & 1 & 0 & 1 & 1
\end{bmatrix},$
$m_{33}=\begin{bmatrix}
0 & 1 & 1 & 1 & 1 & 0 & 1 & 0 & 1 \\
1 & 0 & 1 & 0 & 1 & 1 & 1 & 1 & 0 \\
1 & 1 & 0 & 1 & 0 & 1 & 0 & 1 & 1 \\
1 & 0 & 1 & 0 & 1 & 1 & 1 & 1 & 0 \\
1 & 1 & 0 & 1 & 0 & 1 & 0 & 1 & 1 \\
0 & 1 & 1 & 1 & 1 & 0 & 1 & 0 & 1 \\
1 & 1 & 0 & 1 & 0 & 1 & 0 & 1 & 1 \\
0 & 1 & 1 & 1 & 1 & 0 & 1 & 0 & 1 \\
1 & 0 & 1 & 0 & 1 & 1 & 1 & 1 & 0
\end{bmatrix},$
$m_{34}=\begin{bmatrix}
0 & 1 & 1 & 1 & 1 & 0 & 1 & 0 & 1 \\
1 & 0 & 1 & 0 & 1 & 1 & 1 & 1 & 0 \\
1 & 1 & 0 & 1 & 0 & 1 & 0 & 1 & 1 \\
1 & 1 & 0 & 1 & 0 & 1 & 0 & 1 & 1 \\
0 & 1 & 1 & 1 & 1 & 0 & 1 & 0 & 1 \\
1 & 0 & 1 & 0 & 1 & 1 & 1 & 1 & 0 \\
1 & 0 & 1 & 0 & 1 & 1 & 1 & 1 & 0 \\
1 & 1 & 0 & 1 & 0 & 1 & 0 & 1 & 1 \\
0 & 1 & 1 & 1 & 1 & 0 & 1 & 0 & 1
\end{bmatrix}.$
$m_{44}=\begin{bmatrix}
0 & 1 & 1 & 1 & 0 & 1 & 1 & 1 & 0\\
1 & 0 & 1 & 1 & 1 & 0 & 0 & 1 & 1\\
1 & 1 & 0 & 0 & 1 & 1 & 1 & 0 & 1\\
1 & 1 & 0 & 0 & 1 & 1 & 1 & 0 & 1\\
0 & 1 & 1 & 1 & 0 & 1 & 1 & 1 & 0\\
1 & 0 & 1 & 1 & 1 & 0 & 0 & 1 & 1\\
1 & 0 & 1 & 1 & 1 & 0 & 0 & 1 & 1\\
1 & 1 & 0 & 0 & 1 & 1 & 1 & 0 & 1
\end{bmatrix}.$}
\end{center}
and $m_{ji}=m_{ij}^T$. Since there are  zeros on the main diagonal of $L$, it is impossible to apply Construction 2 in this case. The smallest known example of Construction 2 has parameters $(120,92,76,70,15,8)$, which was first found in \cite{BG}.

\section{Concluding remarks}

In Section \ref{FirstCon}, we provided construction of divisible design graphs, which depends on choosing a matrix $L$ and bijections $\sigma_{i,j}$. We have not estimated the number of isomorphism classes for divisible design graphs that can be obtained from the constructions, but this can  be apparently done in the same way as in \cite[Proposition 3.5]{MM}. The question of mutual isomorphisms of these divisible design graphs is really complicated and requires further study. 
The other question is whether any graph found in \cite{BG} can be obtained from our constructions by choosing a suitable matrix $L$ and bijections $\sigma_{i,j}$.

\section*{Acknowledgements}  

The author was supported by the grant of The Natural Science Foundation of Hebei Province (project No.~A2023205045).

The author is really grateful to the anonymous referees for their comments and corrections that helped improve the paper.

\end{document}